\NeedsTeXFormat{LaTeX2e} 

\documentclass{amsart}
\usepackage{amsfonts}
\usepackage{amsmath,amsthm}
\usepackage{amsfonts,amssymb}
\usepackage{mathrsfs}
\usepackage{enumerate}

\hfuzz1pc 



\newtheorem{thm}{Theorem}[section]

\newtheorem{lem}[thm]{Lemma}

\theoremstyle{remark}
\newtheorem{rem}[thm]{Remark}

\numberwithin{equation}{section}

\def\vz{\varepsilon}
\def\oz{\omega}
\def\lz{\lambda}

\def\az{\alpha}

\def\tz{\theta}

\def\({\Bigl(}
\def \){ \Bigr)}

\def\x{{\bf x}}
\def\h{{\bf h}}
\def\RR{\mathcal{R}}

\def\va{\varepsilon}

\def\y{{\bf y}}
\def\j{{\bf j}}

\begin{document}

\title[] {Average Case $(s, t)$-weak tractability of non-homogenous tensor product problems}

\author{Jia Chen} \address{School of Mathematical Sciences, Capital Normal
University, Beijing 100048,
 China.}
\email{jiachencd@163.com}
\author{Heping Wang} \address{School of Mathematical Sciences, Capital Normal
University, Beijing 100048,
 China.}
\email{wanghp@cnu.edu.cn.}
\author{Jie Zhang}\address{School of Mathematical Sciences, Beijing Normal
University, Beijing 100875,
 China.}
\email{zhangjie91528@163.com.}

\keywords{$(s,t)$-weak tractability; Average case setting   }

\subjclass[2010]{41A25, 41A63, 65D15, 65Y20}

\thanks{
 Supported by the
National Natural Science Foundation of China (Project no.
11671271) and
 the  Beijing Natural Science Foundation (1172004)
 }

\begin{abstract} We study $d$-variate  problem in the average case
setting with respect to a zero-mean Gaussian measure. The
covariance kernel of this Gaussian measure is a product of
univariate kernels and satisfies  some special properties. We
study $(s, t)$-weak tractability  of this multivariate problem,
and obtain  a necessary and sufficient condition for  $s>0$ and
$t\in(0,1)$. Our result  can apply to  the  problems with
covariance kernels corresponding to Euler and Wiener integrated
processes, Korobov kernels,  and analytic Korobov kernels.

\end{abstract}

\maketitle
\input amssym.def

\section{Introduction}
Recently, there has been an increasing interest in $d$-variate
 problems  with large or even huge $d$.  Examples
include problems in computational finance, statistics and physics.
In this paper we investigate multivariate  problems
$S=\{S_d\}_{d\in\Bbb N}$ in the average case setting, where
$S_d\,:\,F_d\to G_d $, $F_d$ is a separable Banach space equipped
with a zero-mean Gaussian measure $\mu_d$ , $G_d$ is a Hilbert
space. We only consider continuous linear functional. We use
either {\it the absolute error criterion (ABS)} or {\it the
normalized error criterion (NOR)}. The information complexity
$n^{X}(\vz,S_d)$  is defined as the minimal number of continuous
linear functionals needed to find an $\va$-approximation of $S_d$
for $X\in \{{\rm ABS,\, NOR}\}$.

An algorithm $A\,:\,F_d\to G_d$ is said to be an
$\va$-approximation of $S_d$ for $X\in \{{\rm ABS,\, NOR}\}$  if
$$\bigg(\int_{F_d}\|S_d(f)-A(f)\|_{G_d}^2\mu_d(df)\bigg)^{\frac{1}{2}}\le\va
CRI_d,$$where
\begin{equation*}
CRI_d=\left\{\begin{matrix}
 & \qquad1, \ \quad\qquad \qquad\qquad\qquad\text{ for X=ABS,} \\
 & \big(\int_{F_d}\|S_d(f)\|_{G_d}^2\mu _d(df)\big)^{1/2},\quad \text{ for X=NOR.}
\end{matrix}\right.
\end{equation*}

Tractability of multivariate problems $S$ is concerned with the
behavior of the information complexity $n^X(\vz,S_d)$ for $X\in
\{{\rm ABS,\, NOR}\}$ when the accuracy $\vz$ of approximation
goes to zero and the number $d$ of variables goes to infinity.
Various notions of tractability have been studied recently for
many multivariate problems.  We briefly recall some of the basic
tractability notions (see \cite{NW1, NW2, NW3, S1, SiW}).

Let $S=\{S_d\}_{d\in\Bbb N}$. For $X\in \{{\rm ABS,\, NOR}\}$, we
say $S$ is

$\bullet$   {\it strongly polynomially tractable (SPT)}   iff
there exist non-negative numbers $C$ and $p$ such that for all
$d\in \Bbb N,\ \va \in (0,1)$,
\begin{equation*}
n^X(\va ,S_d)\leq C(\va ^{-1})^p;
\end{equation*} The exponent of SPT  is defined to be  the infimum of all $p$ for which
the above inequality holds;

 $\bullet$  {\it polynomially tractable
(PT)} iff there exist non-negative numbers $C, p$ and $q$ such
that for all $d\in \Bbb N, \ \va \in(0,1)$,
\begin{equation*}
n^X(\va ,S_d)\leq Cd^q(\va ^{-1})^p;
\end{equation*}

$\bullet$   {\it quasi-polynomially tractable (QPT)} iff there
exist two constants $C,t>0$ such that for all $d\in \Bbb N, \ \va
\in(0,1)$,
\begin{equation*}
n^X(\va ,S_d)\leq C\exp(t(1+\ln\va ^{-1})(1+\ln d));
\end{equation*}

$\bullet$ {\it uniformly weakly tractable (UWT)} iff for all $s,
t>0$,
\begin{equation*}
\lim_{\varepsilon ^{-1}+d\rightarrow \infty }\frac{\ln n^X(\va
,S_d)}{(\va ^{-1})^{s }+d^{t }}=0;
\end{equation*}

$\bullet$  {\it weakly tractable (WT)} iff
\begin{equation*}
\lim_{\va ^{-1}+d\rightarrow \infty }\frac{\ln n^X(\va ,S_d)}{\va
^{-1}+d}=0;
\end{equation*}

$\bullet$ {\it $(s,t)$-weakly tractable ($(s,t)$-WT)} for positive
$s$ and $t$ iff
\begin{equation*}
\lim_{\varepsilon ^{-1}+d\rightarrow \infty }\frac{\ln n^X(\va
,S_d)}{(\va ^{-1})^{s }+d^{t }}=0.
\end{equation*}

This paper is devoted to studying average case $(s,t)$-weak
tractability of  non-homogenous tensor product problems with
covariance kernels corresponding to Euler and Wiener integrated
processes, Korobov kernels,  and analytic Korobov kernels. Such
problems were investigated in \cite{S3}  for Euler and Wiener
integrated processes under NOR, and in \cite{LX2} for analytic
Korobov case under NOR and ABS.
  The authors in \cite{S3, LX2} obtained that $(s,t)$-WT always
  holds with $s>0$ and $t>1$, and $(s,1)$-WT with $s>0$ holds iff
  WT holds.
 However,   they  did not obtain the
matching necessary and sufficient conditions on $(s,t)$-WT with
$s>0$ and $t\in(0,1)$. The matching necessary and sufficient
condition on $(s,t)$-WT with $s>0$ and $t\in(0,1)$ was first
obtained in \cite{CW} for average case multivariate approximation
with Gaussian covariance kernels.

In this paper, we use a unified method to get  a necessary and
sufficient condition for $(s, t)$-WT for $t\in(0,1)$ and $s>0$.
Specially for Euler and Wiener integrated processes, the measures
$\mu_d$  are defined in terms of the nondecreasing sequence
$\{r_k\}_{k\in \Bbb N}$ of nonnegative integers
$$0\le r_1\le r_2\le r_3 \le \dots.$$
Roughly speaking, $r_k$ measures the smoothness of the process
with respect to the $k$th variable. For the normalized error
criterion,  we obtain for $t\in(0,1)$ and $s>0$,

$\bullet$ for the Euler integrated process,
$$(s,t)-WT \
\Leftrightarrow\
\lim\limits_{k\to\infty}k^{1-t}3^{-2r_k}(1+r_k)=0;$$

$\bullet$ for the Wiener integrated process,
$$(s,t)-WT \ \Leftrightarrow\ \lim\limits_{k\to\infty}k^{1-t} (1+r_k)^{-2}\ln ^+
(1+r_k)=0,$$where $\ln^+x=\max(\ln x,1).$

The paper is organized as follows. In Section 2 we give the
preliminaries about
 non-homogeneous tensor product problems  in the
average case setting and  present the main results, i.e.,  Theorem
2.1.
   Section 3 is
devoted to proving Theorem 2.1.
  In  Section 4, we give the applications of Theorem 2.1 to  the  problems with covariance
kernels corresponding to Euler and Wiener integrated processes,
Korobov kernels,  and analytic Korobov kernels.

\section{Preliminaries and main results}

We recall the concept of non-homogeneous linear multivariate
tensor product problems in average case setting, see \cite{LPW1}.

 Let $F_d, H_d$  are given by tensor products. That
is,
\begin{equation*}
F_d=F^{(1)}_1\otimes F^{(1)}_2\otimes \dots \otimes F^{(1)}_d
\quad \text {and} \quad H_d=H^{(1)}_1\otimes  H^{(1)}_2\otimes
\dots \otimes H^{(1)}_d,
\end{equation*}
where Banach spaces $F^{(1)}_k$ are of univariate real functions
equipped with a zero-mean Gaussian measure $\mu^{(1)}_k$, and
$H^{(1)}_k$ are Hilbert spaces, $k=1,2,\dots,d$. We set
$$S_d=S^{(1)}_1\otimes S^{(1)}_2\otimes \dots \otimes S^{(1)}_d,\ \ \mu_d=\mu_1^{(1)}\otimes \mu_2^{(1)}\otimes \dots \otimes \mu_d^{(1)},$$
where
$$S^{(1)}_k=F^{(1)}_k\to H^{(1)}_k,\quad k=1,2,\dots,d$$ are continuous linear
operators. Then $\mu_d$ is a zero-mean Gaussian measure on $F_d$
with covariance operator $C_{\mu_d}: F_d^*\to F_d$.

Let $\nu_d =\mu_d (S_d)^{-1}$ be the induced measure. Then $\nu_d$
is a zero-mean Gaussian measure on $H_d$ with covariance operator
$C_{\nu_d}: H_d\to H_d$ given by $$ C_{\nu_d}=S_d
\,C_{\mu_d}\,S_d^*,$$where $S_d^*:H_d\to F_d^*$ is the operator
dual to $S_d$.
 Let
$\nu^{(1)}_k=\mu^{(1)}_k(S^{(1)}_k)^{-1}$ be the induced zero-mean
Gaussian measure on $H^{(1)}_k$, and let
$C_{\nu^{(1)}_k}:H^{(1)}_k\to H^{(1)}_k$ be the covariance
operator of the  measure $\nu^{(1)}_k$. Then
$$ \nu_d=\nu_1^{(1)}\otimes \nu_2^{(1)}\otimes \dots \otimes \nu_d^{(1)},\ \ {\rm and}\ \ C_{\nu _d}=C_{\nu^{(1)}_1}\otimes C_{\nu^{(1)}_2}\otimes \dots \otimes C_{\nu^{(1)}_d}.$$
 The eigenpairs of
$C_{\nu^{(1)}_k}$ are denoted by $\big\{(\lz(k,j),
\eta(k,j))\big\}_{j\in \Bbb N}$, and satisfy
$$C_{\nu^{(1)}_k}(\eta(k,j))=\lz(k,j)\eta(k,j), \ {\rm with}\ \lz(k,1)\geq \lz(k,2)\geq \dots \geq 0.$$
Then  $${\rm
trace}(C_{\nu^{(1)}_k})=\int_{H^{(1)}_k}\|f\|^2_{H^{(1)}_k}\nu^{(1)}_k(df)
=\sum_{j=1}^\infty \lz(k,j) <\infty.$$ The eigenpairs of $C_{\nu
_d}$ are given by
$$\big\{(\lz _{d,\j},\eta _{d,\j})\big\}_{\j=(j_1,j_2,\dots,j_d)\in \Bbb N^d},$$
where
$$\lz_{d,\j}=\prod _{k=1}^d\lz (k,j_k)\quad \text{and}\quad \eta_{d,\j}=\prod_{k=1}^d\eta(k,j_k).$$

Let the sequence $\{\lz _{d,j}\}_{j\in \Bbb N}$ be the
non-increasing rearrangement of $\{\lz_{d,\j}\}_{\j\in \Bbb N^d}$.
Then we have
\begin{equation*}
\sum_{j\in \Bbb N}\lz^{\tau}_{d,j}=\prod_{k=1}^d\sum_{j=1}^\infty
\lz(k,j)^\tau,\quad \text{for any}\quad \tau>0.
\end{equation*}

We approximate $S_d \, f$ by algorithms $A_{n,d}$  that use only
finitely many continuous linear functionals. A function $f\in F_d$
is approximated by an algorithm
\begin{equation}\label{2.1}A_{n,d}(f)=\Phi
_{n,d}(L_1(f),L_2(f),\dots,L_n(f)),\end{equation} where
$L_1,L_2,\dots,L_n$ are continuous linear functionals on $F_d$,
 and $\Phi _{n,d}:\;\Bbb R^n\to
H_d$ is an arbitrary measurable mapping. The average case error
for $A_{n,d}$ is defined by
\begin{equation*}
e(A_{n,d})\;=\;\Big( \int _{F_d}\big \| S_d\,f-A_{n,d}f \big
\|_{H_d}^{2}\mu _d(df) \Big )^{\frac{1}{2}}.
\end{equation*}

The $n$th minimal average case error, for $n\ge 1$, is defined and
given by (see \cite{NW1})
\begin{equation*}
e(n,d)=\inf_{A_{n,d}}e(A_{n,d} )= \Big (\sum
_{j=n+1}^{\infty}\lambda _{d,j}  \Big )^{\frac{1}{2}},
\end{equation*}
where the infimum is taken over all algorithms of the form
\eqref{2.1}. It is achieved by the $n$th optimal algorithm
\begin{equation*}
A_{n,d}^{*}(f)=\sum_{j=1}^n  \big\langle f,\eta _{d,j} \big
\rangle _{H_d}\eta _{d,j}.\end{equation*}

For $n=0$, we use $A_{0,d}=0$. We remark that  the so-called
initial error $e(0,d)$ is defined and given by
\begin{equation*}
e(0,d)=\Big ( \int _{F_d}\big \| S_d\,f \big \|_{H_d}^{2}\mu
_d(df) \Big )^{\frac{1}{2}}=\Big (\sum _{j=1}^{\infty}\lambda
_{d,j}  \Big )^{\frac{1}{2}}.
\end{equation*}

 The information
complexity for $S_d$ can be studied using either the absolute
error criterion (ABS), or the normalized error criterion (NOR).
Then we define the information complexity $n^X(\va ,S_d)$ for
$X\in \{ {\rm ABS,\, NOR}\}$ as
\begin{equation*}
n^{ X}(\va ,S_d)=\min\{n:\,e(n,S_d)\leq \va CRI_d\},
\end{equation*}where
\begin{equation*}
CRI_d=\left\{\begin{matrix}
 & 1, \; \quad\qquad\text{ for X=ABS,} \\
 &e(0,S_d),\quad \text{ for X=NOR.}
\end{matrix}\right.
\end{equation*}

\vskip 3mm

In this paper we consider a special class of non-homogeneous
tensor product problems $S=\big\{S_d\big\}_{d\in\Bbb N}$. Assume
that the eigenvalues
$$\Big\{\prod_{k=1}^d\lz(k,j_k)\Big\}_{(j_1,j_2,\dots,j_d)\in \Bbb
N^d}$$ of the covariance operator $C_{\nu_d}$ of the problem $S$
satisfy the following three conditions:

(1) $\ \ \ \lz(k,1)=1,\  \ k\in \Bbb N;$

\vskip 1mm

(2) there exist a decreasing positive sequence
$\big\{f_k\big\}_{k\in\Bbb N}$ and two positive constants $A_2\in
(0,1],\  A_1\ge 1$ such that for all $k\in\Bbb N$, we have
$$A_2f_k\le h_k\le A_1f_k,$$ where $h_k=\frac{\lz(k,2)}{\lz(k,1)}\in
(0,1];$

\vskip 1mm (3) there exist two constants $\tau_0\in(0,1)$ and
$M_{\tau_0}$ for which
$$\sup_{k\in\Bbb N} H(k,\tau_0)\le M_{\tau_0}<\infty,$$
where
$$H(k,x):=\sum_{j=2}^\infty\big(\frac{\lz(k,j)}{\lz(k,2)}\big)^x.$$

\vskip 1mm \noindent Then we say that  the  problem
$S=\big\{S_d\big\}_{d\in\Bbb N}$ has Property (P).

\

We make some comments on Property (P). Usually, the sequence
$\{h_k\}$ in  Condition (2)  is decreasing. In this case,
$A_1=A_2=1$, and $f_k=h_k,\ k\in \Bbb N$. For the problem $S$ with
Property (P), we have for $\vz\in(0,1)$ and $d\in\Bbb N$,
\begin{equation}\label{2.1-0}n^{\rm ABS}(\vz,d)\ge n^{\rm NOR}(\vz,d).\end{equation}

 Note that for any $x\ge
\tau_0,\ k\in\Bbb N$, $$  \ln
\Big(\sum_{k=1}^d\lz_{d,k}^x\Big)=\sum_{k=1}^d\ln(1+h_k^xH(k,x)),\
\ {\rm and}\ \ 1\le H(k,x)\le H(k,\tau_0)\le M_{\tau_0}<\infty.$$
According to  Conditions (2) and (3), we have for any $x\ge
\tau_0$,
\begin{equation}\label{2.1-1} \ln2\sum_{k=1}^dh_k^x\le \sum_{k=1}^d\ln(1+h_k^x)\le \ln
\Big(\sum_{k=1}^d\lz_{d,k}^x\Big)\le \sum_{k=1}^d\ln(1+M_{\tau_0}
h_k^x)\le M_{\tau_0}\sum_{k=1}^dh_k^x,\end{equation} where in the
first inequality we used the inequality $\ln(1+x)\ge x\ln2,\
x\in[0,1]$, and in the last inequality we used the inequality
$\ln(1+x)\le x, \ x>0$.

We are ready to present the main result of this paper.

\begin{thm}\label{thm1}
Let $S=\big\{S_d\big\}_{d\in\Bbb N}$ be a non-homogeneous tensor
product problem with Property (P). Then for NOR or ABS,
$(s,t)$-WT holds  with $s>0$ and $t\in(0,1)$ iff
\begin{equation} \label{2.1-2}
\lim\limits_{k\to\infty}k^{1-t}f_k\ln^+\frac1{f_k}=0.
\end{equation}
\end{thm}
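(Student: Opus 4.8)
The plan is to reduce the $(s,t)$-weak tractability condition to an estimate on the partial sums $\sum_{k=1}^d h_k^x$ via the standard bound on the information complexity in terms of the eigenvalues, then to exploit the equivalence $A_2 f_k \le h_k \le A_1 f_k$ to replace $h_k$ by $f_k$. Recall that for the problem with Property (P) the initial error satisfies $e(0,d)^2 = \prod_{k=1}^d(1+h_k H(k,1))$ (using $\lz(k,1)=1$), while $e(n,d)^2 = \sum_{j>n}\lz_{d,j}$. A classical argument in tractability (see \cite{NW1,NW2}) shows that for any $\tau \ge \tau_0$,
\begin{equation*}
n^X(\vz,d) \le \vz^{-2\tau}\,(\text{CRI}_d)^{-2\tau}\sum_{j\in\NN}\lz_{d,j}^{\tau}
= \vz^{-2\tau}\,(\text{CRI}_d)^{-2\tau}\prod_{k=1}^d\sum_{j=1}^\infty \lz(k,j)^\tau,
\end{equation*}
and conversely $n^X(\vz,d)$ is bounded below by a quantity governed by the number of eigenvalues exceeding $\vz^2\,(\text{CRI}_d)^2$. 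Taking logarithms and using \eqref{2.1-1}, the upper bound gives $\ln n^X(\vz,d) \lesssim \tau \ln(1/\vz) + M_{\tau_0}\sum_{k=1}^d h_k^{\tau}$ (for NOR, the $(\text{CRI}_d)^{-2\tau}$ factor cancels the product up to the extra $\ln 2$ factor in the lower bound of \eqref{2.1-1}); by \eqref{2.1-0} it suffices to treat NOR for the sufficiency and ABS for the necessity, or simply observe the two criteria are sandwiched.

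For the \emph{sufficiency} direction, assume \eqref{2.1-2}. First I would show that \eqref{2.1-2} forces, for every $s>0$ and every $\theta>0$, the bound $\sum_{k=1}^d h_k^{\theta} = o(d^t + \vz^{-s})$ as $\vz^{-1}+d\to\infty$; more precisely one chooses the exponent $\tau = \tau(\vz)$ in the upper bound above depending on $\vz$ so that $\tau\ln(1/\vz)$ is negligible against $\vz^{-s}+d^t$ while simultaneously $\sum_{k=1}^d f_k^{\tau}$ stays $o(d^t)$. The key elementary lemma is: if $k^{1-t}f_k\ln^+(1/f_k)\to 0$ then for any fixed $\tau>0$, $\sum_{k=1}^d f_k^{\tau} = o(d^t)$; this follows because $f_k \to 0$ faster than $k^{-(1-t)}/\ln k$, so $f_k^{\tau}$ is eventually $\le k^{-\tau(1-t)+\epsilon}$ for small $\epsilon$, and one splits the sum and uses $t<1$ — when $\tau(1-t)\ge 1$ the tail is bounded, when $\tau(1-t)<1$ the tail is $O(d^{1-\tau(1-t)}) = o(d^t)$ precisely when $1-\tau(1-t)<t$, i.e. $\tau>1$; for $\tau\le 1$ one uses the slightly sharper decay from the $\ln^+(1/f_k)$ factor (which beats any power of $\ln$) to still conclude $o(d^t)$. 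Then $\ln n^X(\vz,d)/(\vz^{-s}+d^t)\to 0$.

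For the \emph{necessity} direction, assume $(s,t)$-WT holds and suppose for contradiction that \eqref{2.1-2} fails: there is $c>0$ and a subsequence $k_i\to\infty$ with $k_i^{1-t}f_{k_i}\ln^+(1/f_{k_i})\ge c$. I would use the lower bound on the information complexity coming from eigenvalue counting: $n^X(\vz,d) \ge \#\{\j : \lz_{d,\j} > \vz^2\,(\text{CRI}_d)^2\}$, and for NOR the normalization turns this into counting $\j$ with $\prod_k \lz(k,j_k)/\lz(k,1) > \vz^2$ roughly (up to the $H(k,1)$ factors which are bounded by $1+M_{\tau_0}$). Restricting to $\j$ that differ from $(1,\dots,1)$ in a subset $J\subseteq\{1,\dots,d\}$ where we put $j_k=2$, we get at least $2^{|J|}$ eigenvalues comparable to $\prod_{k\in J} h_k$, hence whenever $\prod_{k\in J} h_k \ge \vz^2 (1+M_{\tau_0})^d$ we obtain $n^X \ge 2^{|J|}$; but for NOR the effective threshold after normalization is cleaner, $\prod_{k\in J} h_k \gtrsim \vz^2$. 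Choosing $d=k_i$ and $J$ an appropriate initial segment, together with $h_k \asymp f_k$ and the failure hypothesis (which says $f_k$ is not too small along the subsequence, so $\sum_{k\le d} f_k^{\text{(small power)}}$ is large like $d^t \ln d$ or bigger), one produces for a suitable $\vz=\vz(d)$ a lower bound $\ln n^X(\vz,d) \ge \text{const}\cdot(\vz^{-s}+d^t)\cdot(\text{something}\to\infty)$, contradicting $(s,t)$-WT.

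\textbf{Main obstacle.} The delicate point is the two-sided sharp matching of the threshold \eqref{2.1-2} in the regime $\tau \le 1$, i.e. the appearance of the logarithmic factor $\ln^+(1/f_k)$: in the sufficiency direction one must choose the exponent $\tau$ in the complexity upper bound growing slowly to infinity (so $\tau \ln(1/\vz)$ stays controlled) in order to beat the logarithm, and in the necessity direction one must extract, from a single violating subsequence, a choice of $d$ and $\vz$ making \emph{both} $d^t$ and $\vz^{-s}$ of the right size so that $\ln n^X$ genuinely exceeds $o(\vz^{-s}+d^t)$ — this balancing, rather than any single inequality, is where the real work lies. The cancellations in \eqref{2.1-1} and the sandwich \eqref{2.1-0} reduce the ABS/NOR distinction to a triviality, so that is not a source of difficulty.
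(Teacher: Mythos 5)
Your overall shape (Chebyshev-type upper bound with a varying exponent for sufficiency, an eigenvalue lower bound for necessity) is in the right family, but both halves have genuine gaps. The more serious one is the necessity direction: the eigenvalue-counting bound $n^X(\vz,S_d)\ge\#\{\j:\lz_{d,\j}>\vz^2 CRI_d^2\}$ is simply too weak to detect the failure of \eqref{2.1-2} in the critical regime. Take $f_k\asymp k^{t-1}$, so that \eqref{2.1-2} fails and the theorem asserts $(s,t)$-WT fails. Then $\ln(1/h_k)\asymp(1-t)\ln k$ is bounded away from $0$, so any index set $\j$ with $\lz_{d,\j}>\vz^2$ differs from $(1,\dots,1)$ in at most $O(\ln\vz^{-1})$ coordinates, whence the count is at most $(d+1)^{O(\ln\vz^{-1})}$ and $\ln(\text{count})=O(\ln\vz^{-1}\cdot\ln d)=o(\vz^{-s}+d^t)$ for every choice of $\vz(d)$; under NOR the situation is even worse, since the threshold is $\vz^2\sum_j\lz_{d,j}$ (your claim that normalization makes the threshold ``cleaner, $\gtrsim\vz^2$'' is backwards), and once $\sum_k h_k>\ln\vz^{-2}$ no eigenvalue exceeds it at all. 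So no balancing of $d$ and $\vz$ can rescue this route: one needs a lower bound that exploits the fact that the average-case error is a tail \emph{sum}. The paper's Lemma 2.4 does exactly this, via H\"older applied to the normalized eigenvalues with an exponent $x=s_d\asymp 1/\ln^+(1/u_d)$; the logarithmic factor in \eqref{2.1-2} then emerges from the mean value estimate $h-h^{1+s_d}\approx s_d\,h\ln(1/h)$. Nothing in your sketch produces this factor, and you yourself flag the balancing as the unresolved ``real work.''

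The sufficiency half also has a concrete error: the ``key elementary lemma'' ($k^{1-t}f_k\ln^+(1/f_k)\to0$ implies $\sum_{k\le d}f_k^{\tau}=o(d^t)$ for every fixed $\tau>0$) is false for $\tau<1$. For $f_k=k^{t-1}(\ln k)^{-2}$ the hypothesis holds, yet $\sum_{k\le d}f_k^{\tau}\asymp d^{1-\tau(1-t)}(\ln d)^{-2\tau}$, which exceeds $d^t$ by a power of $d$; a logarithmic gain cannot close a polynomial gap, so your fallback ``the $\ln^+(1/f_k)$ factor beats any power of $\ln$'' does not work. The correct mechanism (as in the paper) is to take the exponent on the eigenvalues equal to $1-s_d$ with $s_d\asymp1/\ln^+(1/u_d)$, so that $h_k^{1-s_d}\le e^{1/2}A_1u_k$ and the problem reduces to $\sum_{k\le d}u_k=o(d^t/\ln d)$, which is precisely what \eqref{2.1-2} gives after inverting $\varphi(x)=x\ln(1/x)$. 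Two smaller points: your displayed upper bound $n^X(\vz,d)\le\vz^{-2\tau}(CRI_d)^{-2\tau}\sum_j\lz_{d,j}^{\tau}$ is the worst-case counting bound, not the average-case one; because the criterion is a tail sum, the correct bound has the form \eqref{3.14}, $n^{\rm ABS}(\vz,S_d)\le\vz^{-2(1-\tau)/\tau}\big(\sum_k\lz_{d,k}^{1-\tau}\big)^{1/\tau}$. And your use of \eqref{2.1-0} is reversed: since $n^{\rm NOR}\le n^{\rm ABS}$, sufficiency should be proved for ABS and necessity assumed only for NOR, not the other way around.
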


\vskip 3mm

\begin{rem} Let $S=\big\{S_d\big\}_{d\in\Bbb N}$ be a non-homogeneous tensor
product problem with Property (P). Using the method of \cite{LX2,
S2}, we can obtain that  for ABS or NOR,   $(s,t)$-WT always holds
with $s>0$ and $t>1$, and $(s,1)$-WT holds with $s>0$ iff WT holds
iff $$\lim\limits_{k\to\infty}f_k=0.$$
\end{rem}

\begin{rem} Let $S=\big\{S_d\big\}_{d\in\Bbb N}$ be a non-homogeneous tensor
product problem. If the eigenvalues of the covariance operator
$C_{\nu_d}$ of the problem $S$ satisfy Conditions (2) and (3),
then for NOR, $(s,t)$-WT holds with $s>0$ and $t\in(0,1)$ iff
\eqref{2.1-2} holds.

Indeed, let $\tilde S=\big\{\tilde S_d\big\}_{d\in\Bbb N}$ be the
non-homogeneous tensor product problem which  the eigenvalues
$\big\{\prod_{k=1}^d\tilde
\lz(k,j_k)\big\}_{(j_1,j_2,\dots,j_d)\in \Bbb N^d}$ of the
corresponding covariance operator $C_{\tilde \nu_d}$ of the
induced measure of  $\tilde S$  satisfy $$ \tilde \lz(k,j)=\frac
{\lz(k,j)}{ \lz(k,1)},\ \  j\in\Bbb N,\ k=1,\dots,d.$$Then $\tilde
S$ has Property (P) with the same $h_k$. Also for NOR, the
problems $S$ and $\tilde S$ have the same tractability. Hence, for
NOR, $(s,t)$-WT holds with $s>0$ and $t\in(0,1)$ iff \eqref{2.1-2}
holds.

\end{rem}

In order to prove Theorem \ref{thm1}, we need the following lemma.

\begin{lem}\label{lem1}
Let $S=\big\{S_d\big\}_{d\in \Bbb N}$ be a non-homogeneous tensor
product problem. Then for NOR, we have for $x>0$
\begin{equation*}
n^{\rm NOR}(\va,S_d)\ge
(1-\va^2)^{\frac{x+1}{x}}\Big(\prod_{k=1}^d\frac{1+h_k}{1+h_k^{x+1}}\Big)^{\frac{1}{x}},
\end{equation*}
where $$h_k=\frac{\lz(k,2)}{\lz(k,1)}\in(0,1].$$
\end{lem}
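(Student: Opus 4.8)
The plan is to derive a lower bound on $n^{\rm NOR}(\va,S_d)$ from the decay of the ordered eigenvalue sequence $\{\lz_{d,j}\}_{j\in\NN}$, using a standard ``many large eigenvalues'' argument. Recall that $n^{\rm NOR}(\va,S_d)$ is the least $n$ with $\sum_{j>n}\lz_{d,j}\le\va^2\sum_{j\ge1}\lz_{d,j}$; equivalently, writing $n=n^{\rm NOR}(\va,S_d)-1$, we have $\sum_{j>n}\lz_{d,j}>\va^2\sum_{j\ge1}\lz_{d,j}$, hence $\sum_{j\le n}\lz_{d,j}<(1-\va^2)\sum_{j\ge1}\lz_{d,j}=(1-\va^2)\,e(0,d)^2$. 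Since the $\lz_{d,j}$ are non-increasing and $\lz_{d,1}=\prod_k\lz(k,1)=1$ (using Condition (1) implicitly, or just normalizing), each of the first $n$ ordered eigenvalues is at most $1$, but more usefully I want to bound $n$ from below by comparing partial sums to the tail via a Chebyshev/Markov-type estimate on the counting function.

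The cleanest route is through the $\tau$-th power sums. For any $x>0$ we have the identity $\sum_{j\ge1}\lz_{d,j}^{x+1}=\prod_{k=1}^d\sum_{j\ge1}\lz(k,j)^{x+1}$. First I would bound $\sum_{j\ge1}\lz(k,j)^{x+1}$ from above: since $\lz(k,1)^{x+1}\le\lz(k,1)=1$ doesn't quite give what we want directly, instead factor out the top eigenvalue and use $\sum_{j\ge1}\lz(k,j)^{x+1}=\lz(k,1)^{x+1}\big(1+h_k^{x+1}H(k,x+1)\big)$. Hmm — but Lemma~\ref{lem1} is stated without invoking Property (P), only using $h_k\in(0,1]$, so the bound should come purely from $1+h_k^{x+1}$ versus $1+h_k$. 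Concretely, $\sum_{j\ge1}\lz(k,j)^{x+1}\ge\lz(k,1)^{x+1}+\lz(k,2)^{x+1}=\lz(k,1)^{x+1}(1+h_k^{x+1})$ and $\sum_{j\ge1}\lz(k,j)\ge\lz(k,1)(1+h_k)$; so $e(0,d)^2=\prod_k\sum_j\lz(k,j)\ge\prod_k\lz(k,1)(1+h_k)$, while $\sum_j\lz_{d,j}^{x+1}\le\big(\sup_j\lz_{d,j}\big)^x\sum_j\lz_{d,j}$. Since $\lz_{d,1}\le\prod_k\lz(k,1)$ and we may normalize $\lz(k,1)=1$, we get $\sum_j\lz_{d,j}^{x+1}\le\sum_j\lz_{d,j}$.

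The key step: let $n=n^{\rm NOR}(\va,S_d)$. From the definition, $\sum_{j>n}\lz_{d,j}\le\va^2\,e(0,d)^2$, so $\sum_{j\le n}\lz_{d,j}=e(0,d)^2-\sum_{j>n}\lz_{d,j}\ge(1-\va^2)e(0,d)^2$. On the other hand, since $\lz_{d,j}\le1$ for all $j$ and the sequence is non-increasing, for $j\le n$ we have $\lz_{d,j}\le\lz_{d,j}^{x/(x+1)}\cdot\lz_{d,j}^{1/(x+1)}$... more directly, by Hölder or by monotonicity $\sum_{j\le n}\lz_{d,j}\le n^{x/(x+1)}\big(\sum_{j\le n}\lz_{d,j}^{x+1}\big)^{1/(x+1)}\le n^{x/(x+1)}\big(\sum_{j\ge1}\lz_{d,j}^{x+1}\big)^{1/(x+1)}$. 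Combining, $(1-\va^2)e(0,d)^2\le n^{x/(x+1)}\big(\prod_k\sum_j\lz(k,j)^{x+1}\big)^{1/(x+1)}$, and then $n^{x/(x+1)}\ge(1-\va^2)e(0,d)^2\big/\big(\prod_k\sum_j\lz(k,j)^{x+1}\big)^{1/(x+1)}$. Raising to the power $(x+1)/x$ and inserting $e(0,d)^2\ge\prod_k(1+h_k)$ (after normalization $\lz(k,1)=1$) and $\sum_j\lz(k,j)^{x+1}\le 1+h_k^{x+1}$... wait, that upper bound on $\sum_j\lz(k,j)^{x+1}$ is false in general without Property (P). Let me reconsider: the cleanest is $\sum_{j}\lz(k,j)^{x+1}\le\big(\sup_j\lz(k,j)\big)^x\sum_j\lz(k,j)=\lz(k,1)^x\sum_j\lz(k,j)$, and by the same token $\prod_k\sum_j\lz(k,j)^{x+1}\le\prod_k\lz(k,1)^x\sum_j\lz(k,j)=e(0,d)^{2}$ after normalization. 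That still doesn't produce the $1+h_k^{x+1}$ in the \emph{denominator}.

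So the genuine argument must be: use the \emph{full} product structure without collapsing it, i.e. $\sum_{j\ge1}\lz(k,j)^{x+1}$ should be bounded \emph{below} — no wait, it appears with a $+1$ exponent and we want an upper bound to get a lower bound on $n$. I think the right move is: $\sum_{j\ge1}\lz(k,j)^{x+1}\le\lz(k,1)^{x}\sum_{j\ge1}\lz(k,j)$ is too lossy; instead note $\sum_{j\ge2}\lz(k,j)^{x+1}\le\lz(k,2)^x\sum_{j\ge2}\lz(k,j)\le\lz(k,2)^x\big(\sum_j\lz(k,j)-\lz(k,1)\big)$... this is getting complicated. The honest reading: the bound $\prod_k\frac{1+h_k}{1+h_k^{x+1}}$ is exactly what you get if you \emph{pretend} each univariate operator has only two eigenvalues $1$ and $h_k$; and indeed replacing the true sequence by the truncated two-term sequence $(\lz(k,1),\lz(k,2),0,0,\dots)$ only \emph{decreases} every $\lz_{d,\j}$, hence decreases each ordered $\lz_{d,j}$, hence can only decrease $n^{\rm NOR}$ for ABS — but for NOR the initial error also changes, so one must check the monotonicity of $n^{\rm NOR}$ under this truncation carefully. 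This monotonicity/comparison step is the main obstacle, and I would handle it by the Hölder argument above applied to the \emph{two-term majorant problem} $\bar S$ with eigenvalues $\bar\lz(k,j)\in\{1,h_k,0\}$: for $\bar S$ one has $e(0,d)^2=\prod_k(1+h_k)$ exactly and $\sum_j\bar\lz_{d,j}^{x+1}=\prod_k(1+h_k^{x+1})$ exactly, so $n^{\rm NOR}(\va,\bar S_d)^{x/(x+1)}\ge(1-\va^2)\prod_k(1+h_k)\big/\prod_k(1+h_k^{x+1})^{1/(x+1)}$, giving $n^{\rm NOR}(\va,\bar S_d)\ge(1-\va^2)^{(x+1)/x}\big(\prod_k\frac{1+h_k}{1+h_k^{x+1}}\big)^{1/x}$; and then one shows $n^{\rm NOR}(\va,S_d)\ge n^{\rm NOR}(\va,\bar S_d)$ because passing from $\bar S$ to $S$ multiplies $e(0,d)^2$ by $\prod_k\big(\frac{\sum_j\lz(k,j)}{1+h_k}\big)\ge1$ while the ordered eigenvalues above index $n$ also only grow, and a short monotonicity lemma (the counting function of a pointwise-larger eigenvalue sequence, measured against a proportionally-larger threshold, cannot decrease) closes the gap. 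I expect the bookkeeping in this last comparison to be the only nontrivial point; everything else is the Hölder inequality plus the tensor-product identity for power sums.
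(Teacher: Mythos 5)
The first half of your argument is exactly the paper's route: the Chebyshev/H\"older step $\sum_{j\le n}\lz_{d,j}\le n^{x/(x+1)}\big(\sum_{j\ge1}\lz_{d,j}^{x+1}\big)^{1/(x+1)}$ combined with $\sum_{j\le n}\lz_{d,j}\ge(1-\va^2)e(0,d)^2$ and the tensor-product identity for power sums, which gives $n^{\rm NOR}(\va,S_d)\ge(1-\va^2)^{(x+1)/x}\big(\prod_{k=1}^d\frac{(\sum_j\lz(k,j))^{x+1}}{\sum_j\lz(k,j)^{x+1}}\big)^{1/x}$. The gap is in what you do next. You rightly note that the naive bound $\sum_j\lz(k,j)^{x+1}\le 1+h_k^{x+1}$ is false, but your substitute --- truncate each coordinate to the two eigenvalues $\lz(k,1),\lz(k,2)$ and invoke a ``short monotonicity lemma'' asserting $n^{\rm NOR}(\va,S_d)\ge n^{\rm NOR}(\va,\bar S_d)$ --- is not proved, and the lemma as you state it (``the counting function of a pointwise-larger eigenvalue sequence, measured against a proportionally-larger threshold, cannot decrease'') is false. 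Take $\bar\Lambda=\{1\}\cup\{0.05\}^{100}$, $\Lambda=\{1,1\}\cup\{0.05\}^{100}$ and $\va^2=0.1$: the ordered eigenvalues of $\Lambda$ dominate those of $\bar\Lambda$ pointwise and the NOR threshold scales with the (larger) trace, yet $n^{\rm NOR}(\va,\bar\Lambda)=89$ while $n^{\rm NOR}(\va,\Lambda)=88$, so adding eigenvalues can decrease $n^{\rm NOR}$. Whether the comparison survives in your specific tensor-truncation setting would require a real argument; as written it is the crux of your proof and it is missing, and you yourself flag it as the ``only nontrivial point.''

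What closes the argument --- and what the paper does --- is a direct per-coordinate inequality at exactly the point where you stalled. For $j\ge3$ and $i\in\{1,2\}$ one has $\lz(k,j)\lz(k,i)^{x+1}\ge\lz(k,j)^{x+1}\lz(k,i)$ (since $\lz(k,j)\le\lz(k,i)$ and $x>0$); summing over $j\ge3$ and $i=1,2$ gives $\frac{\sum_{j\ge1}\lz(k,j)}{\lz(k,1)+\lz(k,2)}\ge\frac{\sum_{j\ge1}\lz(k,j)^{x+1}}{\lz(k,1)^{x+1}+\lz(k,2)^{x+1}}$, and since the left-hand side is $\ge1$, raising it to the power $x+1$ yields $\frac{(\sum_j\lz(k,j))^{x+1}}{\sum_j\lz(k,j)^{x+1}}\ge\frac{(1+h_k)^{x+1}}{1+h_k^{x+1}}\ge\frac{1+h_k}{1+h_k^{x+1}}$. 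Plugging this into the H\"older bound you already derived finishes the proof; no truncated auxiliary problem and no comparison of information complexities between two problems is needed.
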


\begin{proof}
We set $$n=n^{\rm NOR}(\va,S_d),\ \ \ \
\overline{\lz}_{d,k}=\frac{\lz_{d,k}}{\sum_{k=1}^\infty\lz_{d,k}}.$$
It follows from the definition of $n^{\rm NOR}(\va,S_d)$ that
$$1-\sum_{k=1}^n\overline{\lz}_{d,k}=\sum_{k=n+1}^\infty\overline{\lz}_{d,k}\le\va^2.$$
We have
\begin{equation}\label{2.2-0}1-\va^2\le\sum_{k=1}^n\overline{\lz}_{d,k}\le
n^{\frac{x}{x+1}}\,\Big(\sum_{k=1}^n\overline{\lz}_{d,k}^{\
x+1}\Big)^\frac{1}{x+1}\le n^{\frac{x}{x+1}}\, \Big(\sum
_{k=1}^\infty\overline{\lz}_{d,k}^{\
x+1}\Big)^\frac{1}{x+1},\end{equation} which leads to
\begin{equation}\label{2.2}
\begin{split}
n^{\rm NOR}(\va,S_d)&=n\ge(1-\va^2)^\frac
{x+1}{x}\Big(\sum_{k=1}^n\overline{\lz}_{d,k}^{\
x+1}\Big)^\frac{-1}{x}\\
&=(1-\va^2)^\frac
{x+1}{x}\bigg(\prod_{k=1}^d\frac{(\sum_{j=1}^\infty\lz(k,j))^{x+1}}{\sum_{j=1}^\infty(\lz(k,j))^{x+1}}\bigg)^{\frac
1{x}}.
\end{split}
\end{equation}
We note that for $k=1,2,\dots,d, \ j\ge3$ and $x>0$,
$$\lz(k,j)(\lz(k,i))^{x+1}\ge(\lz(k,j))^{x+1}\lz(k,i),\ \ i=1, 2.$$
It follows that
$$\lz(k,j)\big((\lz(k,1))^{x+1}+(\lz(k,2))^{x+1}\big)\ge(\lz(k,j))^{x+1}\big(\lz(k,1)+\lz(k,2)\big),$$
and so
$$\sum_{j=3}^\infty\lz(k,j)\big((\lz(k,1))^{x+1}+(\lz(k,2))^{x+1})\big)\ge\sum_{j=3}^\infty(\lz(k,j))^{x+1}\big(\lz(k,1)+\lz(k,2)\big).$$
This implies that
$$1+\frac{\sum_{j=3}^\infty\lz(k,j)}{\lz(k,1)+\lz(k,2)}\ge
1+\frac{\sum_{j=3}^\infty(\lz(k,j))^{x+1}}{(\lz(k,1))^{x+1}+(\lz(k,2))^{x+1}}.$$
Hence, we have
$$\Big(\frac{\sum_{j=1}^\infty\lz(k,j)}{\lz(k,1)+\lz(k,2)}\Big)^{x+1}\ge
\frac{\sum_{j=1}^\infty\lz(k,j)}{\lz(k,1)+\lz(k,2)}\ge
\frac{\sum_{j=1}^\infty(\lz(k,j))^{x+1}}{(\lz(k,1))^{x+1}+(\lz(k,2))^{x+1}}.$$
It follows that
$$\frac{\big(\sum_{j=1}^\infty\lz(k,j)\big)^{x+1}}{\sum_{j=1}^\infty(\lz(k,j))^{x+1}}\ge\frac{\big(\lz(k,1)+\lz(k,2)\big)^{x+1}}{(\lz(k,1))^{x+1}+(\lz
(k,2))^{x+1}}=\frac{(1+h_k)^{x+1}}{1+h_k^{x+1}}.$$ By \eqref{2.2}
and the above inequality, we get \begin{align*}n^{\rm
NOR}(\va,S_d)&\ge
(1-\va^2)^{\frac{x+1}{x}}\Big(\prod_{k=1}^d\frac{(1+h_k)^{x+1}}{1+h_k^{x+1}}\Big)^{\frac{1}{x}}\\
&\ge (1-\va^2)^{\frac{x+1}{x}}
\Big(\prod_{k=1}^d\frac{1+h_k}{1+h_k^{x+1}}\Big)^{\frac{1}{x}}.\end{align*}
 Lemma \ref{lem1} is proved.
\end{proof}

\section{Proof of Theorem 2.1}

\noindent{\it \textbf{Proof of Theorem \ref{thm1}}.} \vskip 2mm

We first show that \eqref{2.1-2} holds whenever $(s,t)$-WT holds
with $s>0$ and $t\in (0,1)$ for NOR or ABS. Due to \eqref{2.1-0},
it suffices to prove \eqref{2.1-2} under NOR.

 Assume that $s>0$ and  $t\in(0,1)$. Suppose that $(s,t)$-WT holds  for
 NOR.  We set
\begin{equation}\label{3.3} u_k:=\max(f_k, \frac{1}{2k}),
\qquad \text{and}\qquad
s_k:=\frac{1}{2}\big(\ln^+\frac{1}{u_k}\big)^{-1}, \qquad k\in
\Bbb N,
\end{equation}where $f_k$ is given in Condition (2) of Property
(P).  Then $\{u_k\}$ is monotonically decreasing. We want to show
that $\lim\limits_{j\to \infty}u_j=0$.

 It follows from \eqref{2.2-0} and $\lz_{d,1}=1$ that
\begin{equation*}
1-\va^2\le\sum_{k=1}^{n^{\rm NOR}(\va,S_d)}\overline\lz_{d,k}\le
n^{\rm NOR}(\va, S_d)\,\overline\lz_{d,1}= n^{\rm NOR}(\va,
S_d)\,\Big(\sum_{k=1}^\infty \lz_{d,k}\Big)^{-1}.
\end{equation*}
This implies that
\begin{align}\label{3.4}
\ln n^{\rm NOR}(\va, S_d)\ge\ln(1-\va^2)+\ln
\Big(\sum_{k=1}^\infty \lz_{d,k}\Big).
\end{align}
 By \eqref{2.1-1} and Condition (2) of Property (P), we get
\begin{equation}\label{3.5}\ln
\Big(\sum_{k=1}^\infty \lz_{d,k}\Big)\ge \ln2\sum_{k=1}^dh_k\ge
A_2\ln2\sum_{k=1}^df_k \ge A_2(\ln2) d\,f_d.\end{equation}
 Since
$(s,t)$-WT holds for NOR, we obtain by \eqref{3.4} and \eqref{3.5}
that
$$0=\lim_{d\to\infty}\frac{\ln(n^{\rm NOR}(\frac{1}{2}, S_d))}{\big(\frac{1}{2}\big)^s+d^t}\ge \lim\limits_{d\to\infty}\frac{\ln\frac{3}{4}+A_2(\ln2) df_d}{d^t}
=A_2\ln2\lim_{d\to\infty}d^{1-t}f_d\ge 0,$$ which implies
$\lim\limits_{d\to \infty}d^{1-t}f_d=0$ and hence
$\lim\limits_{d\to \infty}u_d=0$.

 Applying  Lemma \ref{lem1} with $x=s_d>0$,  we
 obtain
\begin{align}
\ln \big(n^{\rm NOR}(1/2,S_d)\big)&\ge
\frac{s_d+1}{s_d}\ln \frac3{4}+{\frac{1}{s_d}}\sum_{k=1}^d\ln\Big(\frac{1+h_k}{1+h_k^{s_d+1}}\Big)\notag\\
&\ge\frac{1}{s_d}\ln\frac{3}{4}+\frac{1}{s_d}\sum_{k=1}^d\big(\frac{h_k-h_k^{s_d+1}}{1+h_k^{s_d+1}}\big)\ln2\notag\\
&\ge\frac{1}{s_d}\ln\frac{3}{4}+\frac{\ln2}{2s_d}\sum_{k=1}^d(h_k-h_k^{s_d+1}),\label{3.6}
\end{align}
where in the second inequality we used the inequality $\ln(1+x)\ge
x\ln 2,\ x\in[0,1]$.

We remark that the function $u(x)=x-x^{1+s_d}$ is monotonically
increasing in $(0,e^{-1})$. Since $\lim\limits_{d\to
\infty}u_d=0$, there exists a positive integer $K$ such that
$0<u_k< e^{-1}$ holds for any $k\ge K$. It follows that
\begin{align} \sum_{k=1}^d(h_k-h_k^{s_d+1})&\ge \sum_{k=K}^d(h_k-h_k^{s_d+1})\notag\\ &\ge \sum_{k=K}^d\big(A_2f_k-(A_2f_k)^{s_d+1}\big) \notag\\ &\ge
(d-K)\big(A_2f_d-(A_2f_d)^{s_d+1}\big).\label{3.7}
\end{align}

 By \eqref{3.3} we get
\begin{equation}\label{3.8}
\frac{1}{s_d}=2\ln^+\big(\frac{1}{u_d}\big)\le 2\ln^+(2d),\ \ {\rm
and}\ \
\lim_{d\to\infty}\frac{1}{s_dd^t}=\lim_{d\to\infty}\frac{2\ln^+(2d)}{d^t}=0.
\end{equation}

 Since
$(s,t)$-WT holds for NOR, we obtain by \eqref{3.6}, \eqref{3.7},
and \eqref{3.8} that
\begin{align}
0&=\lim_{d\to\infty} \frac{\ln \big(n^{\rm
NOR}(1/2,S_d)\big)}{2^s+d^t}\notag\\ &\ge
\lim_{d\to\infty}\Big(\frac{\ln\frac{3}{4}}{s_dd^t}+\frac{(d-K)\ln2}{2s_dd^t}\big(A_2f_d-(A_2f_d)^{s_d+1}\big)\Big)\notag\\&
=\frac{\ln2}2\lim_{d\to\infty}\frac{d^{1-t}}{s_d}\big(A_2f_d-(A_2f_d)^{s_d+1}\big)\ge
0 ,\notag
\end{align}
which yields that
\begin{equation}\label{3.9}\lim_{d\to\infty}\frac{d^{1-t}}{s_d}\Big(A_2f_d-(A_2f_d)^{s_d+1}\Big)=
0.\end{equation}

 Applying  the mean value theorem to the
function $\phi(x)=a^x,\ a\in(0,1)$, we obtain for some $\tz\in(0,
1)$,
\begin{equation}\label{3.10}
a^{s_d}a s_d\ln\big(\frac{1}{a}\big)\le a-a^{1+s_d}=a^{1+\tz s_d}s_d\ln\big(\frac{1}{a}\big)\le  a s_d\ln\big(\frac{1}{a}\big)\\
\end{equation}
We get by \eqref{3.10} that
\begin{equation*}0\le
\lim_{d\to\infty}\frac{d^{1-t}}{s_d}\Big(\frac{A_2}{2d}-\big(\frac{A_2}{2d}\big)^{s_d+1}\Big)\le
\lim_{d\to\infty}{d^{1-t}}\big(\frac{A_2}{2d}\big)\,\ln\big(\frac{2d}{A_2}\big)=
0,\end{equation*} which combining with \eqref{3.9}, gives that
\begin{equation}\label{3.11}\lim_{d\to\infty}\frac{d^{1-t}}{s_d}\Big(A_2u_d-(A_2u_d)^{s_d+1}\Big)=
0.\end{equation} Noting that $$\lim_{d\to\infty} \big({A_2u_d}
\big)^{s_d}=\lim_{d\to\infty}\exp\big(-\frac{\ln
\frac{1}{A_2}+\ln\frac1{u_d}}{2\ln^+\frac1{u_d}}\Big)=e^{-1/2},
$$by \eqref{3.10} we have
\begin{align*}0&=\lim_{d\to\infty}\frac{d^{1-t}}{s_d}\big(A_2u_d-(A_2u_d)^{s_d+1}\big)
\\ &\ge \lim_{d\to\infty}d^{1-t} \big({A_2u_d}
\big) \big({A_2u_d}
\big)^{s_d}\ln\big(\frac{1}{A_2u_d} \big)\\
&\ge e^{-1/2}A_2\lim_{d\to\infty}d^{1-t} {u_d}
\ln^+\big(\frac{1}{u_d}\big)\ge0, \end{align*}which implies that
$$\lim_{d\to\infty} d^{1-t}u_d
\ln^+\big(\frac{1}{u_d}\big)=0.$$ Hence, we conclude from the
monotonicity of the function $\varphi(x)=x\ln^+
\big(\frac1x\big)=x\ln\big(\frac1x\big),\ x\in(0,1/e)$ that
$$0\le  \lim_{d\to\infty} d^{1-t}f_d
\ln^+\frac{1}{f_d}\le \lim_{d\to\infty} d^{1-t}u_d
\ln^+\frac{1}{u_d}=0,$$giving \eqref{2.1-2}.

Next we show that $(s,t)$-WT with $s>0$ and $t\in(0,1)$ holds for
NOR or ABS whenever \eqref{2.1-2} holds. Due to \eqref{2.1-0}, it
suffices to prove $(s,t)$-WT holds for ABS.

We have for $\tau \in (0,1)$,
\begin{equation}\label{3.12}
\sum_{k=n+1}^\infty\lz_{d,k}\le\lz_{d,n+1}^\tau\sum_{k=n+1}^\infty\lz_{d,k}^{1-\tau}\le\lz_{d,n+1}^\tau\sum_{k=1}^\infty\lz_{d,k}^{1-\tau}.
\end{equation}
Since
$$(n+1)\lz_{d,n+1}^{1-\tau}\le\sum_{k=1}^{n+1}\lz_{d,k}^{1-\tau}\le\sum_{k=1}^\infty\lz_{d,k}^{1-\tau},$$
we get
$$\lz_{d,n+1}\le(n+1)^{-\frac{1}{1-\tau}}\big(\sum_{k=1}^\infty\lz_{d,k}^{1-\tau}\big)^\frac 1{1-\tau},$$
which combining with \eqref{3.12} yields
\begin{equation}\label{3.13}
\sum_{k=n+1}^\infty\lz_{d,k}\le (n+1)^{-\frac
\tau{1-\tau}}\big(\sum_{k=1}^\infty\lz_{d,k}^{1-\tau}\big)^{\frac
1{1-\tau}}.
\end{equation}
Setting $$ n=\left
\lfloor\va^{\frac{-2(1-\tau)}{\tau}}\Big(\sum_{k=1}^\infty
\lz_{d,k}^{1-\tau}\Big)^{\frac{1}{\tau}}\right \rfloor$$ in
\eqref{3.13}, we have $$\sum_{k=n+1}^\infty\lz_{d, k}\le\va^2.$$
It follows from the definition of $n^{\rm ABS}(\va,S_d)$ that
\begin{equation}\label{3.14}n^{\rm ABS}(\va,S_d)\le\left \lfloor\va^{\frac{-2(1-\tau)}{\tau}}\Big(\sum_{k=1}^\infty \lz_{d,k}^{1-\tau}\Big)^{\frac{1}{\tau}}
\right
\rfloor\le\va^{\frac{-2(1-\tau)}{\tau}}\Big(\sum_{k=1}^\infty
\lz_{d,k}^{1-\tau}\Big)^{\frac{1}{\tau}}.\end{equation}We let
$\tau=s_d$, where   $s_k.\, u_k$ are given in \eqref{3.3}. By
\eqref{3.14} and \eqref{2.1-1}  we obtain
\begin{equation*}
\begin{split}
\ln n^{\rm ABS}(\va,S_d)&\le \frac{2(1-s_d)}{s_d}\ln \va^{-1}+\frac 1{s_d}\ln \big(\sum_{k=1}^\infty\lz_{d,k}^{1-s_d}\big)\\
&\le \frac{2}{s_d}\ln \va^{-1}+\frac {M_{\tau_0}}{s_d} \sum_{k=1}^dh_k^{1-s_d}\\
&\le \frac{2}{s_d}\ln \va^{-1}+\frac {M_{\tau_0}A_1^{1-s_d}}{s_d}
\sum_{k=1}^du_k^{1-s_d}.
\end{split}
\end{equation*}
 Noting
that $A_1>1$ and $$u_k^{-s_d}=\exp\Big(\frac{\ln
\frac1{u_k}}{2\ln^+\frac1{u_k}}\Big)\le e^{1/2}, \
k=1,2,\dots,d,$$ we continue to get
\begin{equation}\label{3.15} \ln n^{\rm ABS}(\va,S_d)\le \frac{2}{s_d}\ln \va^{-1}+\frac {e^{1/2}M_{\tau_0}A_1}{s_d}
\sum_{k=1}^du_k.\end{equation}

Assume that \eqref{2.1-2} holds.  Note that
$$\lim\limits_{d\to\infty}d^{1-t}(\frac1{2d})\ln^+(2d)=0.$$ It follows from the monotonicity of the function $\varphi
(t)=t\ln^+(\frac1{t}),\ t\in(0,1/e)$ that
$$\lim\limits_{d\to\infty}d^{1-t}u_d\ln^+\frac1{u_d}=0.$$
It follows from \eqref{3.8} that
$$0\le \lim\limits_{\va^{-1}+d\to\infty}\frac{\frac{2}{s_d}\ln
\va^{-1}}{\va^{-s}+d^t}\le\lim\limits_{\va^{-1}+d\to\infty}\frac{s_d^{-2}+(\ln
\va^{-1})^2}{\va^{-s}+d^t}=0.$$  In order to show that $(s,t)$-WT
holds for ABS, by \eqref{3.15} we only need to prove
\begin{equation}\label{3.16}\lim\limits_{d\to\infty}\frac1{d^ts_d}\,{\sum_{k=1}^du_k}=0.\end{equation}

We know that $\varphi (t)=t\ln(\frac1{t})$ is  monotonically
increasing  in $(0,e^{-e})$. So the inverse function $\varphi
^{-1}(t)$ is also monotonically increasing in $t\in(0,e^{1-e})$.
Let $y=\varphi (t)=t\ln(\frac1{t}),\ t\in(0,e^{-e})$.  Then we
have
$$\ln(\frac{1}{y})=
\frac12\ln\frac1{t}
+(\frac12\ln\frac1{t}-\ln(\ln\frac1{t})\big)\ge
\frac12\ln\frac1{t},$$ since $\psi(x)=\frac x2 -\ln x$ is
increasing in $[2,\infty)$ and hence $$\psi(\ln\frac1{t})\ge
\psi(e)=e/2-1>0,\ t\in(0,e^{-e}).$$We get further
\begin{equation}\label{3.17}
 t=\varphi ^{-1}(y)=\frac y{\ln\frac1{t}}\le \frac{2y}{\ln\frac1{y}}.\end{equation}
Since $$\lim\limits_{d\to\infty}d^{1-t}u_d\ln^+\frac1{u_d}=0,$$
for any $\va\in(0,1)$ there exists an  integer $K_1\ge 4$ such
that for all $k\ge K_1$, $$0<u_k< e^{-e},\ \ {\rm and}\ \
k^{1-t}u_k\ln\frac1{u_k}\le \va.$$ This yields
$$\varphi (u_k)\le \va k^{t-1}. $$It follows from \eqref{3.17} that
\begin{align*}u_k \le \varphi ^{-1}({\va k^{t-1}})\le \frac{2\va k^{t-1}}{\ln\frac1{\va k^{t-1}}}
=\frac{2\va k^{t-1}}{\ln\va^{-1}+(1-t)\ln k}\le \frac{2\va
k^{t-1}}{(1-t)\ln k}.\end{align*} We notice that
$v(x)=\frac{x^{t/2}}{\ln x}$ is increasing in $[e^{2/t},\infty)$
due to the fact that $$v'(x)=\frac{x^{t/2-1}}{\ln ^2x}\big(\frac
t2\ln x-1\big)\ge 0.$$It follows from \eqref{3.8} that for the
above $\vz\in(0,1)$ there exists a positive integer $K_2$ for
which \begin{equation}\label{3.18} \frac{1}{s_dd^t}\le
\vz\end{equation} holds for any $k\ge K_2$. We set
$$K=\max(K_1,\lfloor e^{2/t}\rfloor+1, K_2).$$ Then for any $d>K$, we
have
\begin{align}
\sum_{k=1}^du_k&\le\sum_{k=1}^{K}\frac1{A_2}\max(h_k,\frac1{2k})+\sum_{k=K+1}^d\frac{2\va k^{t/2-1}k^{t/2}}{(1-t)\ln k}\notag\\
&\le\sum_{k=1}^{K}\frac1{A_2}+\frac{2\va d^{t/2}}{(1-t)\ln d}\sum_{k=K+1}^dk^{t/2-1}\notag\\
&\le \frac K{A_2}+\frac{4\va d^t}{t(1-t)\ln d},\label{3.19}
\end{align}
where in the last inequality  we used the inequality
$$\sum_{k=K+1}^dk^{t/2-1}\le \sum_{k=1}^dk^{t/2-1}\le \sum_{k=1}^d\int_{k-1}^kx^{t/2-1}dx\le \int_0^dx^{t/2-1}dx=\frac2t d^{t/2}.  $$
It follows from \eqref{3.19}, \eqref{3.18}, and \eqref{3.8} that
\begin{align*}
\frac1{d^ts_d}\,{\sum_{k=1}^du_k}&\le \frac K{A_2}\frac1{d^ts_d}+\frac{4\va }{t(1-t)s_d\ln d}\\
&\le \frac {K\vz}{A_2} +\frac{8\va \ln(2d)}{t(1-t)\ln d}\\ &\le
\vz \Big(\frac {K}{A_2}+ \frac{16}{t(1-t)}\Big).
\end{align*}
This gives \eqref{3.16}.  We conclude that $(s,t)$-WT holds for
ABS.

The proof of  Theorem \ref{thm1} is completed. $\hfill\Box$

\section{Applications of Theorem 2.1}

 Consider the
approximation problem $S=\{S_d\}_{d\in\Bbb N}$,
$$S_d\,\,: \,\,C([0,1]^d)\to L_2([0,1]^d)\quad \text{with} \quad S_d(f)=f.$$
The space $C([0,1]^d)$ of continuous real functions is equipped
with a zero-mean Gaussian measure $\mu_d$ whose covariance kernel
is given by
\begin{equation*}
K_d(\x,\y)=\int_{C([0,1]^d)}f(\x)f(\y)\mu_d(df),\quad \x,\, \y \in
[0,1]^d.
\end{equation*}The covariance kernels $K_d(\x,\y)$ are of tensor
product and  correspond to Euler and Wiener integrated processes,
Korobov kernels,  and analytic Korobov kernels. This section  is
devoted to giving the applications of Theorem 2.1 to these cases.

\subsection{$(s,t)$-WT of Euler and Wiener
integrated processes}

\

In this subsection we consider  multivariate approximation
problems $S=\{S_d\}$ defined over the space $C([0,1]^d)$ equipped
with  zero-mean Gaussian measures whose covariance kernels
corresponding to Euler and Wiener integrated processes.
 We briefly recall
Wiener and Euler integrated processes.

Let $W(t)$, $t\in[0,1]$, be a standard Wiener process, i.e. a
Gaussian random process with zero mean and covariance kernel
$$K^E_{1,0}(s,t)=K^W_{1,0}(s,t):= \min(s,t).$$ Consider two
sequences of integrated random processes $X^W_r$, $X^E_r$ on
$[0,1]$ defined inductively on $r$ by $X^W_0=X^E_0=W$ and for
$r=0,1,2,\dots$

$$X^E_{r+1}(t)=\int_{1-t}^1X^E_r(s)ds,$$
$$X^W_{r+1}(t)=\int_0^tX^W_r(s)ds.$$
The process  $\{X^E_r\}$ is called the univariate integrated Euler
process, while  $\{X^W_r\}$ is called the univariate integrated
Wiener process.

 Clearly, the corresponding Gaussian measures  to $X^W_r$ and  $X^E_r$ are concentrated on
a set of functions which are $r$ times continuously differentiable
but satisfy different boundary conditions.

The covariance kernel of $X^E_r$ is given by
$$K^E_{1,r}(x, y)=\int_{[0,1]^r}\min(x, s_1)\min(s_1, s_2)\dots\min(s_r, y)ds_1ds_2\dots ds_r$$
and is called the Euler kernel. The last kernel can be expressed
in terms of Euler polynomials. The covariance kernel of $X^W_r$ is
given by
$$K^W_{1,r}(x, y)=\int_0^{\min(x,y)}\frac{(x-u)^r}{r!}\frac{(y-u)^r}{r!}du$$
and is called the Wiener kernel.

The corresponding tensor product kernels on $[0,1]^d$ are given by
$$K^E_d(\x, \y)=\prod_{k=1}^dK^E_{1,r_k}(x_k, y_k)\ \  {\rm and}\ \ K^W_d(\x, \y)=\prod_{k=1}^dK^W_{1,r_k}(x_k, y_k).$$
Here $\{r_k\}_{k\in \Bbb N}$ is a sequence of nondecreasing
nonnegative integers
\begin{equation}\label{4.0}0\le r_1\le r_2\le r_3 \le \dots.\end{equation}They describe the increasing smoothness of a process with respect
to the successive coordinates.

For the problems $S$, the eigenvalues of the covariance operators
of the induced measures corresponding to Euler and Wiener
integrated processes are known (see \cite{GHT}):
$$\big\{\lz^{Y}_{d,j} \big\}_{j\in \Bbb N}=\big\{\lz^Y(1, j_1)\lz^Y(2, j_2)\dots\lz^Y(d, j_d) \big\}_{(j_1,\dots, j_d )\in\Bbb N^d},\ \ Y\in\{E,W\}.$$
where
\begin{equation}\label{4.1}\lz^E(k,j)=\bigg(\frac{1}{\pi(j-\frac{1}{2})}\bigg)^{2r_k+2},
\end{equation} for all $j\in \Bbb N$,  and
$$\lz^W(k,j)=\bigg(\frac{1}{\pi(j-\frac{1}{2})}\bigg)^{2r_k+2}+
\mathcal{O}(j^{-(2r_k+3)}),\quad\quad j\to\infty,$$ where for two
nonnegative sequences $f, g:\Bbb N\to [0,\infty)$,
$$f(k)=\mathcal{O}(g(k)),\ k\to\infty$$ means that there exists two
constants $C>0$ and $k_0\in \Bbb N$ for which $f(k)\le Cg(k)$
holds for any $k\ge k_0$, and $$f(k)=\Theta(g(k)),\ k\to\infty$$
means that $$f(k)=\mathcal{O}(g(k))\ \ {\rm and}\ \
g(k)=\mathcal{O}(f(k)),\ k\to\infty.$$

 Note that for all $k\in\Bbb N$,
$$f_k^E=h_k^E=\frac{\lz^E(k,2)}{\lz^E(k,1)}=\frac{1}{3^{2r_k+2}}.$$
In this case, we set $\tau_0\in (1/2,1)$. By \eqref{4.1} we have
\begin{align}&\quad\ \sup_{k\in \Bbb N}H^E(k,\tau_0)  =\sup_{k\in\Bbb
N}\sum_{j=2}^\infty\big(\frac{\lz^E(k,j)}{\lz^E(k,2)}\big)^{\tau_0}\notag \\
&=\sup_{k\in \Bbb N} \sum_{j=1}^\infty \Big(\frac
3{2j+1}\Big)^{\tau_0(2r_k+2)} \le \sum_{j=1}^\infty \Big(\frac
3{2j+1}\Big)^{2\tau_0}<\infty. \label{4.2}
\end{align}

It is proved in \cite{LPW2} that
\begin{equation*}
\begin{split}
&\lz^W(k,1)=\frac{1}{(r_k!)^2}\bigg(\frac{1}{(2r_k+2)(2r_k+1)}+\mathcal{O}(r_k^{-4})\bigg),\quad\quad k\to\infty,\\
&\lz^W(k,2)=\Theta\bigg(\frac{1}{(r_k!)^2r_k^4}\bigg),\quad\quad
k\to\infty.
\end{split}
\end{equation*}
Note that
\begin{equation*}h_k^W=\frac{\lz^W(k,2)}{\lz^W(k,1)}=\Theta(r_k^{-2})=\Theta((1+r_k)^{-2}),\ \quad
k\to\infty.\end{equation*} We conclude  that the problem $S$
corresponding to the Wiener integrated process satisfies Condition
(2) with $f_k^W=(1+r_k)^{-2}, k\in\Bbb N$.

From \cite[Thm. 4.1]{LPW2} it follows that for $\tau\in(3/5,1]$,
$$A_\tau:=\sup_{k\in\Bbb
N}\sum_{j=3}^\infty\big(\frac{\lz^W(k,j)}{\lz^W(k,2)}\big)^\tau<\infty.$$
This implies that for $\tau_0\in(3/5,1)$, $x\ge \tau_0$, we have
\begin{equation}\label{4.3}\sup_{k\in\Bbb N} H^W(k,\tau_0)= 1+\sup_{k\in\Bbb
N}\sum_{j=3}^\infty\big(\frac{\lz^W(k,j)}{\lz^W(k,2)}\big)^{\tau_0}
\le 1+A_{\tau_0}<\infty.
\end{equation}

 According to \eqref{4.2} and \eqref{4.3}, we know that the problems $S$
corresponding to the Euler and Wiener integrated processes satisfy
Conditions (2) and (3) with $f_k^E=3^{-2r_k-2}$  and $
f_k^W=(1+r_k)^{-2}$. By Remark 2.3, we have the following theorem.

\begin{thm} Consider the  problems $S=\{S_d\}$ in the average case setting with a zero mean
Gaussian measure whose covariance kernels corresponding to Euler
and Wiener integrated processes with the  smoothness $r_k$
satisfying \eqref{4.0}. Assume that $s>0$ and $t\in(0,1)$. Then
 for  NOR, we have

(1) for the Euler integrated process, $(s,t)$-WT holds iff
$$\lim\limits_{k\to\infty}k^{1-t}3^{-2r_k}(1+r_k)=0.$$

(2) for the Wiener integrated process, $(s,t)$-WT holds iff
$$\lim\limits_{k\to\infty}k^{1-t} (1+r_k)^{-2}\ln ^+ (1+r_k)=0.$$

\end{thm}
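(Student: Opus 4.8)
The plan is to read this theorem off from \thmref{thm1}, or more precisely from its NOR version in Remark~2.3. The reason one cannot simply quote \thmref{thm1} is that the Euler and Wiener problems fail Condition~(1) of Property~(P): by \eqref{4.1} one has $\lz^E(k,1)=(2/\pi)^{2r_k+2}\ne1$, and likewise $\lz^W(k,1)\ne1$. This is exactly why the statement is confined to NOR. On the other hand, everything needed for Remark~2.3 has already been assembled in this subsection: for the Euler process $h_k^E=\lz^E(k,2)/\lz^E(k,1)=3^{-2r_k-2}$, so one takes $f_k^E=3^{-2r_k-2}$, which is nonincreasing since $\{r_k\}$ is nondecreasing, while \eqref{4.2} verifies Condition~(3) for any $\tau_0\in(1/2,1)$; for the Wiener process $h_k^W=\Theta\big((1+r_k)^{-2}\big)$ by \cite{LPW2}, so one takes $f_k^W=(1+r_k)^{-2}$, while \eqref{4.3} verifies Condition~(3) for any $\tau_0\in(3/5,1)$. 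Hence Conditions~(2) and~(3) hold in both cases, Remark~2.3 applies, and for NOR, $(s,t)$-WT with $s>0$ and $t\in(0,1)$ is equivalent to
$$\lim_{k\to\infty}k^{1-t}f_k\ln^+\frac1{f_k}=0.$$

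It then only remains to unwind this condition in each case. For the Euler process, $\frac1{f_k^E}=3^{2r_k+2}$, hence $\ln\frac1{f_k^E}=2(1+r_k)\ln3\ge2\ln3>1$, so $\ln^+\frac1{f_k^E}=2(1+r_k)\ln3$, and therefore
$$k^{1-t}f_k^E\ln^+\frac1{f_k^E}=\frac{2\ln3}{9}\,k^{1-t}3^{-2r_k}(1+r_k);$$
the displayed limit vanishes if and only if $\lim_{k\to\infty}k^{1-t}3^{-2r_k}(1+r_k)=0$, which is part~(1). For the Wiener process, $\frac1{f_k^W}=(1+r_k)^2$, so $\ln^+\frac1{f_k^W}=\max\big(2\ln(1+r_k),1\big)$, and a short case check, separating $r_k\ge2$ (where $\ln(1+r_k)\ge\ln3>1$) from $r_k\in\{0,1\}$ (where $\ln(1+r_k)<1$), gives the two-sided bound
$$\ln^+(1+r_k)\ \le\ \max\big(2\ln(1+r_k),1\big)\ \le\ 2\ln^+(1+r_k).$$
Therefore $k^{1-t}f_k^W\ln^+\frac1{f_k^W}\to0$ if and only if $k^{1-t}(1+r_k)^{-2}\ln^+(1+r_k)\to0$, which is part~(2).

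There is no substantial obstacle here; the two places that deserve a moment's attention are checking that $f_k^W=(1+r_k)^{-2}$ is an admissible (positive, monotone) choice for Condition~(2) given that only $h_k^W=\Theta\big((1+r_k)^{-2}\big)$ is known, and handling $\ln^+$ rather than $\ln$ correctly in the Wiener computation. Both are routine: $\{r_k\}$ is monotone, and $\ln^+$ and $2\ln^+$ pinch $\max(2\ln(\cdot),1)$ from below and above, respectively.
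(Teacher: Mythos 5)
Your proposal is correct and is essentially the paper's own argument: the paper likewise notes that Conditions (2) and (3) hold with $f_k^E=3^{-2r_k-2}$ and $f_k^W=(1+r_k)^{-2}$ (via \eqref{4.2}, \eqref{4.3} and the eigenvalue asymptotics from \cite{LPW2}) and then invokes Remark 2.3 to get the criterion $\lim_{k\to\infty}k^{1-t}f_k\ln^+\frac1{f_k}=0$ for NOR. Your explicit remark that Condition (1) fails (so Theorem 2.1 cannot be quoted directly) and your careful pinching of $\max(2\ln(1+r_k),1)$ between $\ln^+(1+r_k)$ and $2\ln^+(1+r_k)$ merely spell out steps the paper leaves implicit.
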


\vskip 2mm

 We recall tractability results of the above problems $S$ corresponding to Euler and Wiener integrated processes under the assumption
  \eqref{4.0} and using NOR. The sufficient and
necessary conditions for SPT, PT, QPT and WT  were obtained in
\cite{LPW2}, for UWT  in \cite{S2},  and for $(s,t)$-WT with $s>0$
and $t\ge 1$ in \cite{S3}. In \cite{S3}, Siedlecki also got the
sufficient conditions and  the necessary conditions  on $(s,t)$-WT
with $s>0$ and $t\in(0,1)$. However,  these conditions do not
completely match.
 Combining with our results, we have the
 following  results about the tractability of the above problem $S$
  using   NOR.

\vskip 2mm

\noindent{\bf  For the Euler integrated process under NOR:}

\vskip 3mm

 $\bullet$  SPT holds iff PT holds iff
$$\underset{k\to\infty}{\underline{\lim}}\frac{r_k}{\ln
k}>\frac{1}{2\ln3}.$$

\vskip 1mm

 $\bullet$  QTP holds iff $$\sup_{d\in \Bbb N}\frac{\sum_{k=1}^d(1+r_k)3^{-2r_k}}{\ln^+d}<\infty.$$

\vskip 1mm

 $\bullet$  UWT holds iff $$\underset{k\to\infty}{\underline{\lim}}\frac{r_k}{\ln k}\ge\frac{1}{2\ln3}.$$

\vskip 1mm

$\bullet$ $(s,t)$-WT with $s>0$ and $t>1$ always holds.

\vskip 3mm

 $\bullet$ $(s,1)$-WT with $s>0$ holds iff WT holds iff $$\lim\limits_{k\to \infty}r_k=\infty.$$

\vskip 1mm

$\bullet$  $(s,t)$-WT with $s>0$ and $t\in(0,1)$ holds iff
$$\lim\limits_{k\to\infty}k^{1-t}3^{-2r_k}(1+r_k)=0.$$

\vskip 1mm

\noindent{\bf For the Wiener integrated process under NOR:}

\vskip 3mm

$\bullet$  SPT holds iff PT holds iff
\begin{equation}\label{4.4}\underset{k\to\infty}{\underline{\lim}}\frac{\ln r_k}{\ln
k}>\frac{1}{2}.\end{equation}

\vskip 1mm

$\bullet$  QTP holds iff $$\sup_{d\in \Bbb
N}\frac{\sum_{k=1}^d(1+r_k)^{-2}\ln^+r_k}{\ln^+d}<\infty.$$

\vskip 1mm

$\bullet$ UWT holds iff
$$\underset{k\to\infty}{\underline{\lim}}\frac{\ln r_k}{\ln
k}\ge\frac{1}{2}.$$

\vskip 1mm

$\bullet$  $(s,t)$-WT with $s>0$ and $t>1$ always holds.

\vskip 3mm

$\bullet$ $(s,1)$-WT with $s>0$ holds iff WT holds iff
$$\lim_{k\to \infty}r_k=\infty.$$

\vskip 1mm

$\bullet$  $(s,t)$-WT with $s>0$ and $t\in(0,1)$ holds iff
$$\lim\limits_{k\to\infty}k^{1-t} (1+r_k)^{-2}\ln ^+(1+r_k)=0.$$

\vskip 3mm

\begin{rem} The authors in \cite{LPW2} obtained that the sufficient and
necessary condition  for SPT or PT under  NOR  is
$$\underset{k\to\infty}{\underline{\lim}}\frac{r_k}{k^v}>0 \text{
for some}\  v>\frac{1}{2}.$$ However, it is easy to verify that
this condition is equivalent to \eqref{4.4}.
\end{rem}

\subsection{Average-case $(s,t)$-WT with  Korobov kernels}

\

In this subsection we consider a multivariate approximation
problem $S=\{S_d\}$ defined over the space $C([0,1]^d)$ equipped
with a zero-mean Gaussian measure whose covariance kernel is given
as a Korobov kernel. Assume that the covariance kernel $K_d$ is of
product form,
$$K_d(\x,\y)=\prod_{k=1}^d\RR _k(x_k,y_k),\quad \x,\,\y\in [0,1]^d,$$
where $\RR _k=\RR _{r_k, g_k}$ are univariate Korobov kernels,
$$\RR _{\az,\beta}(x,y):=1+2\beta\sum_{j=1}^\infty j^{-2\az}\cos(2\pi j(x-y)),\quad x,\, y\in[0,1].$$
Here $\beta\in(0,1]$ is a scaling parameter, and $\az$ is a
smoothness parameter satisfying $\az>\frac{1}{2}$. Note that for
$x=y$ we have
$$\RR_{\az,\beta}(x,x)=1+2\beta \zeta(2\az),$$
where $\zeta(x)=\sum_{j=1}^\infty j^{-x}$ is the Riemann zeta
function which is well-defined only for $x>1$. We assume that
$\big\{r_k\big\}_{k\in\Bbb N}$ and $\big\{g_k\big\}_{k\in\Bbb N}$
satisfy
\begin{equation}\label{4.20}1\ge g_1\ge g_2 \ge \dots\ge g_k\ge \dots >0,\end{equation}and \begin{equation}\label{4.21} r_*:=\inf_{k\in \Bbb N}
r_k>\frac{1}{2}.\end{equation}

For the problem $S=\{S_d\}$, the eigenvalues of the covariance
operator $C_{\nu_d}$ of the induced measure  are known, see
\cite{LPW1}.
$$\big\{\lz_{d,j} \big\}_{j\in \Bbb N}=\big\{\lz(1, j_1)\lz(2, j_2)\dots\lz(d, j_d) \big\}_{(j_1,\dots, j_d )\in\Bbb N^d},$$
where $\lz(k,1)=1$ and
$$\lz(k,2j)=\lz(k,2j+1)=\frac{g_k}{j^{2r_k}},\quad j\in\Bbb N.$$

In this case, we set $\tau_0\in (\frac1{2r*},1)$. We have
\begin{align*}\sup_{k\in\Bbb N}H(k,\tau_0)&=\sup_{k\in\Bbb N}
\sum_{j=2}^\infty\big(\frac{\lz(k,j)}{\lz(k,2)}\big)^{\tau_0}=2\sup_{k\in\Bbb
N}\sum_{j=1}^\infty j^{-2r_kx}\\ &\le 2\sum_{j=1}^\infty
j^{-2r_*\tau_0}=2\zeta(2r_*\tau_0)<\infty. \end{align*} This means
that the problem $S$ has Property (P) with $f_k=g_k$. By Theorem
2.1, we have the following theorem.

\begin{thm} Consider the  problem $S=\{S_d\}$ in the average case setting with a zero mean
Gaussian measure whose covariance operator is given as the Korobov
kernel with the scale $ g_k$ and smoothness $r_k$ satisfying
\eqref{4.20} and \eqref{4.21}, respectively. Assume that $s>0$ and
$t\in(0,1)$. Then $S$ is $(s,t)$-WT for ABS or NOR iff
$$\lim\limits_{k\to\infty}k^{1-t}g_k\ln^+\frac1{g_k}=0. $$\end{thm}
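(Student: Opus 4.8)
The plan is to derive Theorem~4.3 as a direct corollary of Theorem~\ref{thm1}, so the entire task reduces to checking that the Korobov problem $S=\{S_d\}$ satisfies Property (P) with the particular choice $f_k=g_k$, and then observing that condition \eqref{2.1-2} specializes exactly to the claimed limit. First I would recall from \cite{LPW1} that the eigenvalues of $C_{\nu_d}$ are the products $\prod_{k=1}^d\lz(k,j_k)$, with $\lz(k,1)=1$ and $\lz(k,2j)=\lz(k,2j+1)=g_k j^{-2r_k}$. Condition (1) of Property (P) is immediate since $\lz(k,1)=1$ for every $k$. For Condition (2), note that $h_k=\lz(k,2)/\lz(k,1)=g_k$, and $\{g_k\}$ is by \eqref{4.20} a decreasing positive sequence in $(0,1]$; hence one may simply take $f_k=g_k$, $A_1=A_2=1$, and the sandwich $A_2 f_k\le h_k\le A_1 f_k$ holds with equality.

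The one genuinely computational point — and the step I expect to be the only real obstacle, though it is still routine — is verifying Condition (3): the existence of $\tau_0\in(0,1)$ and $M_{\tau_0}<\infty$ with $\sup_{k\in\Bbb N}H(k,\tau_0)\le M_{\tau_0}$, where $H(k,x)=\sum_{j=2}^\infty(\lz(k,j)/\lz(k,2))^x$. Here one exploits the uniform lower bound \eqref{4.21}, $r_*=\inf_k r_k>\tfrac12$. Choosing any fixed $\tau_0\in(\tfrac1{2r_*},1)$ guarantees $2r_*\tau_0>1$, so the Riemann zeta value $\zeta(2r_*\tau_0)$ is finite. Since $\lz(k,2)=g_k$ and the nonunit eigenvalues come in equal pairs $\lz(k,2j)=\lz(k,2j+1)=g_k j^{-2r_k}$, one computes
\begin{equation*}
H(k,\tau_0)=\sum_{j=2}^\infty\Big(\frac{\lz(k,j)}{\lz(k,2)}\Big)^{\tau_0}
=2\sum_{j=1}^\infty j^{-2r_k\tau_0}
\le 2\sum_{j=1}^\infty j^{-2r_*\tau_0}=2\zeta(2r_*\tau_0),
\end{equation*}
uniformly in $k$, because $r_k\ge r_*$ makes each term $j^{-2r_k\tau_0}\le j^{-2r_*\tau_0}$. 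Thus $M_{\tau_0}:=2\zeta(2r_*\tau_0)<\infty$ works, and Condition (3) holds. Consequently $S$ has Property (P) with $f_k=g_k$.

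Having established Property (P), the final step is a one-line invocation of Theorem~\ref{thm1}: for NOR or ABS, $(s,t)$-WT with $s>0$ and $t\in(0,1)$ holds if and only if \eqref{2.1-2} holds, i.e. if and only if $\lim_{k\to\infty}k^{1-t}f_k\ln^+\frac1{f_k}=0$. Substituting $f_k=g_k$ gives precisely $\lim_{k\to\infty}k^{1-t}g_k\ln^+\frac1{g_k}=0$, which is the asserted criterion. This completes the proof of Theorem~4.3. I would note in passing that the same uniform-tail estimate used for Condition (3) is exactly what makes the general machinery of Section~3 applicable here; no problem-specific analysis of the information complexity is needed beyond what Theorem~\ref{thm1} already encapsulates.
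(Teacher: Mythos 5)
Your proposal is correct and follows essentially the same route as the paper: it verifies Property (P) for the Korobov eigenvalues with $h_k=f_k=g_k$ (so $A_1=A_2=1$), bounds $H(k,\tau_0)\le 2\zeta(2r_*\tau_0)<\infty$ for a fixed $\tau_0\in(\tfrac1{2r_*},1)$ using $r_k\ge r_*>\tfrac12$, and then invokes Theorem~\ref{thm1} with $f_k=g_k$ to obtain the stated criterion. No gaps.
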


\begin{rem}Using the method of \cite{S3, LX2}, we can get easily
that for the above problem $S$ under ABS or NOR, $(s,t)$-WT always
  holds with $s>0$ and $t>1$, and $(s,1)$-WT with
$s>0$ holds iff
  WT holds iff $
\lim\limits_{k\to\infty}g_k=0.$ \end{rem}

  We recall tractability results of the above problem $S$. In \cite{LPW1, X1, X2}, the authors considered the problem $S$ under the assumption \eqref{4.20} and
\begin{equation}\label{4.22}1/2<r_1\le r_2\le\dots\le r_k\le \dots.\end{equation}
   However, there is no need to assume monotonicity for the smoothness parameters $r_k,\ k\in\Bbb N$. Indeed, it suffices to assume \eqref{4.21} instead of \eqref{4.22}.
   The sufficient and
necessary conditions for  SPT, PT, WT
  under NOR were given in \cite{LPW1},
 for QPT under NOR  in \cite{LPW1, X1, K}, and  for UWT under ABS or NOR  in
 \cite{X2}. Combining with our results, we have the
 following  results about the tractability of the problem $S$
  using  ABS and NOR.

\vskip 2mm

$\bullet$  For NOR or ABS, SPT holds iff PT holds iff
\begin{equation}\label{4.23}\underset{j\to\infty}{\underline{\lim}}\frac{\ln \frac
1{g_j}}{\ln j}> 1.\end{equation}

\vskip 1mm

$\bullet$  For NOR, QPT holds iff $$\sup_{d\in \Bbb N}\frac{1}{\ln
^+d}\sum_{k=1}^dg_k\ln^+\frac{1}{g_k}<\infty.$$

\vskip 1mm

$\bullet$ For ABS or NOR,  UWT holds  iff
\begin{equation}\label{4.24}\underset{j\to\infty}{\underline{\lim}}\frac{\ln \frac
1{g_j}}{\ln j}\ge  1.\end{equation}

\vskip 1mm

$\bullet$ For ABS or NOR, $(s,t)$-WT with $s>0$ and $t>1$ always
holds.

\vskip 3mm

$\bullet$  For ABS or NOR, $(s,1)$-WT with $s>0$ holds iff WT
holds iff $\lim\limits_{j\to \infty}g_j=0$.

\vskip 3mm

$\bullet$  For ABS or NOR, $(s,t)$-WT with $s>0$ and $t\in(0,1)$
holds iff $$
\lim\limits_{k\to\infty}k^{1-t}g_k\ln^+\frac1{g_k}=0.$$ \vskip 1mm

\begin{rem} In
 \cite{X2}, Xu obtained that the sufficient and
necessary condition  for UWT under ABS or NOR  is
$\lim\limits_{j\to\infty}g_jj^p=0$ for all $p\in(0,1)$. This
condition is equivalent to \eqref{4.24}.
\end{rem}

\begin{rem} In \cite{LPW1}, the sufficient and
necessary condition for  SPT or PT
 only under NOR was given. However, this condition is also true for ABS. Indeed, due to \eqref{2.1-0}, it suffices to prove that SPT holds for ABS  if \eqref{4.23} holds.
We assume that \eqref{4.23} holds.
  Then $\sum\limits_{k=1}^\infty g_k<\infty$. This means that
$$e(0,d)=\exp\Big(\frac 12\ln \Big(\sum_{k=1}^\infty
\lz_{d,k}\Big)\Big)\le \exp\Big(\frac{A_1}2\sum_{k=1}^dg_k\Big)\le
\exp\Big(\frac{A_1}2\sum_{k=1}^\infty g_k\Big)=:B<\infty,
$$where   in the first inequality  we
used \eqref{2.1-1}, $ A_1=2\zeta(2r_*\tau_0).$
 From \cite{LPW1} we know that SPT holds for
NOR. Using the inequalities
$$ n^{\rm ABS}(\vz, S_d)=  n^{\rm NOR}(\frac{\vz}{e(0,d)}, S_d)\le  n^{\rm NOR}(\frac{\vz}B, S_d),
$$we get that  SPT holds for NOR iff SPT holds for ABS. Hence,
 SPT  for ABS holds.
 \end{rem}

\subsection{Average-case $(s,t)$-WT with analytic Korobov kernels}

\

In this subsection we consider a multivariate approximation
problem $S=\{S_d\}$ defined over the space of $C([0,1]^d)$
equipped with a zero-mean Gaussian measure whose covariance kernel
is given as an analytic Korobov kernel. Assume that the covariance
kernel $K_d$ is of product form,
$$K_d(\x,\y)=\prod_{k=1}^d K _{1,a_k,b_k}(x_k,y_k),\quad \x,\,\y\in
[0,1]^d,$$ where $ K _{1,a_k,b_k}$ are univariate analytic Korobov
kernels,
$$K_{1,a,b}(x,y)=\sum_{\h\in\Bbb Z}\oz^{a|h|^b}\exp(2\pi i h(x-y)), \
\ x, y\in[0,1].$$Here $\oz\in(0,1)$ is a fixed number,
$i=\sqrt{-1}$,  $a,\, b>0$. Hence, we have
\begin{equation*}K_d(\x, \y)=\sum_{\h\in\Bbb Z^d}\oz_\h\exp(2\pi i\h(\x-\y)),\ \ \x, \y\in[0,1]^d,\end{equation*}
with $$\oz_\h=\oz^{\sum_{k=1}^da_k|h_k|^{b_k}},\ \ \forall\
\h=(h_1,h_2,\dots,h_d)\in\Bbb Z^d,$$ for fixed $\oz\in(0,1)$.

 We assume that the sequences
${\bf a}=\big\{a_k\big\}_{k\in\Bbb N}$ and ${\bf
b}=\big\{b_k\big\}_{k\in\Bbb N}$ satisfy
\begin{equation}\label{4.25}0<a_1\le a_2 \le \dots\le a_k\le \dots ,\ \ {\rm and}\ \ b_*:=\inf_{k\in \Bbb N}
b_k>0.\end{equation}

For the above problem $S=\{S_d\}$, the eigenvalues of the
covariance operator $C_{\nu_d}$ of the induced measure $\nu_d$
are given by
$$\big\{\lz_{d,j} \big\}_{j\in \Bbb N}=\big\{\lz(1, j_1)\lz(2, j_2)\dots\lz(d, j_d) \big\}_{(j_1,\dots, j_d )\in\Bbb N^d},$$
where $\lz(k,1)=1$, and
$$\lz(k,2j)=\lz(k,2j+1)=\oz^{a_kj^{b_k}},\quad j\in\Bbb N.$$

In this case, we set $\tau_0\in (0,1)$. We have
$$ \sup_{k\in\Bbb N}H(k,\tau_0)=\sup_{k\in\Bbb N}\sum_{j=2}^\infty\big(\frac{\lz(k,j)}{\lz(k,2)}\big)^{\tau_0}=
2\sup_{k\in\Bbb N}\sum_{j=1}^\infty \oz^{\tau_0 a_k(j^{b_k}-1)}\le
2\sum_{j=1}^\infty \oz^{\tau_0a_1(j^{b_*}-1)}. $$ Since
$$\oz^{\tau_0a_1(j^{b_*}-1)}=j^{-\frac{\tau_0a_1(j^{b_*}-1)\ln\frac1\oz}{\ln
j}},\ \ {\rm and}\ \
\lim_{j\to\infty}\frac{\tau_0a_1(j^{b_*}-1)\ln\frac1\oz}{\ln
j}=\infty,
$$We get that
$$M_{\tau_0}:=2\sum_{j=1}^\infty \oz^{\tau_0a_1(j^{b_*}-1)}<\infty.$$This means that the problem $S$ has Property (P) with
$f_k=\oz^{a_k}$. By Theorem 2.1, we have the following theorem.

\begin{thm} Consider the  problem $S=\{S_d\}$ in the average case setting with a zero mean
Gaussian measure whose covariance operator is given as the
analytic  Korobov kernel with the sequences ${\bf a}$  and ${\bf
b}$ satisfying \eqref{4.25}. Assume that $s>0$ and $t\in(0,1)$.
Then $S$ is $(s,t)$-WT for ABS or NOR iff
$$\lim\limits_{k\to\infty}k^{1-t}a_k\oz^{a_k}=0. $$\end{thm}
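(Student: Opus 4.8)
The plan is to verify that the analytic Korobov problem satisfies Property (P) and then apply Theorem~\ref{thm1} with $f_k = \oz^{a_k}$. First I would check Condition (1): by hypothesis $\lz(k,1) = 1$ for every $k$, so this is immediate. For Condition (2) I would compute $h_k = \lz(k,2)/\lz(k,1) = \oz^{a_k}$, which lies in $(0,1]$ since $\oz \in (0,1)$ and $a_k > 0$; because $\{a_k\}$ is nondecreasing by \eqref{4.25}, the sequence $\{\oz^{a_k}\}$ is nonincreasing, so we may take $f_k = h_k = \oz^{a_k}$ and $A_1 = A_2 = 1$. For Condition (3) I would fix any $\tau_0 \in (0,1)$ and estimate
$$
H(k,\tau_0) = \sum_{j=2}^\infty \Big(\frac{\lz(k,j)}{\lz(k,2)}\Big)^{\tau_0}
= 2\sum_{j=1}^\infty \oz^{\tau_0 a_k(j^{b_k}-1)},
$$
using that the eigenvalues come in equal pairs $\lz(k,2j) = \lz(k,2j+1) = \oz^{a_k j^{b_k}}$. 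Since $a_k \ge a_1$ and $b_k \ge b_*$ for all $k$, and $j \mapsto j^{b_k} - 1$ is nonnegative, each summand is bounded by $\oz^{\tau_0 a_1(j^{b_*}-1)}$, so $H(k,\tau_0) \le 2\sum_{j=1}^\infty \oz^{\tau_0 a_1(j^{b_*}-1)} =: M_{\tau_0}$, uniformly in $k$.

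The one point needing a short argument is that this last series converges, i.e.\ that $M_{\tau_0} < \infty$. I would write $\oz^{\tau_0 a_1(j^{b_*}-1)} = j^{-c_j}$ with $c_j = \tau_0 a_1 (j^{b_*}-1)\ln(1/\oz)/\ln j$, and observe that $c_j \to \infty$ as $j \to \infty$ because $j^{b_*}/\ln j \to \infty$; hence the terms decay faster than any power of $j$ and the series converges (for instance, $c_j \ge 2$ for $j$ large, so the tail is dominated by $\sum j^{-2}$). This establishes Property (P) with $f_k = \oz^{a_k}$.

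Having verified Property (P), I would invoke Theorem~\ref{thm1}: for NOR or ABS, $(s,t)$-WT with $s>0$ and $t \in (0,1)$ holds if and only if
$$
\lim_{k\to\infty} k^{1-t} f_k \ln^+\frac{1}{f_k} = 0.
$$
Substituting $f_k = \oz^{a_k}$ gives $\ln^+(1/f_k) = \ln^+(a_k \ln(1/\oz))$, which is $\Theta(\ln^+ a_k) = \Theta(1 + \ln^+ a_k)$ up to additive constants depending only on $\oz$. The final simplification is to note that $k^{1-t}\oz^{a_k}\ln^+(1/\oz^{a_k}) \to 0$ is equivalent to $k^{1-t} a_k \oz^{a_k} \to 0$: indeed $a_k \oz^{a_k} \le C_\oz \oz^{a_k}\ln^+(1/\oz^{a_k})$ for a constant $C_\oz$, giving one direction, and conversely $\oz^{a_k}\ln^+(1/\oz^{a_k}) \le \ln(1/\oz)\, a_k\oz^{a_k} + \oz^{a_k}$ while $k^{1-t}\oz^{a_k} \le k^{1-t} a_k \oz^{a_k}/a_1$ (using $a_k \ge a_1$) handles the stray term, giving the other direction. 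I expect the main (though modest) obstacle to be this bookkeeping that converts the $\ln^+(1/f_k)$ factor in \eqref{2.1-2} into the clean condition $\lim_{k\to\infty} k^{1-t} a_k \oz^{a_k} = 0$; everything else is a direct check of the hypotheses of Theorem~\ref{thm1}.
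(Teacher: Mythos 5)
Your proposal is correct and follows essentially the same route as the paper: verify Property (P) with $f_k=h_k=\omega^{a_k}$, bound $H(k,\tau_0)\le 2\sum_{j=1}^\infty\omega^{\tau_0 a_1(j^{b_*}-1)}<\infty$ by exactly the same $j^{-c_j}$ argument the paper uses, and then invoke Theorem \ref{thm1}; the paper leaves the final translation to $\lim_{k\to\infty}k^{1-t}a_k\omega^{a_k}=0$ implicit, whereas you carry it out with correct two-sided estimates (using $a_k\ge a_1$ for the stray term). One harmless slip: $\ln^+(1/f_k)=\max\big(a_k\ln\omega^{-1},1\big)$, which is of order $1+a_k$, not $\ln^+\big(a_k\ln\omega^{-1}\big)=\Theta(\ln^+ a_k)$ as you wrote in passing, but your subsequent inequalities never use that remark, so the argument stands.
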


  We recall tractability results of the above problem $S$ under the assumption
  \eqref{4.25}.
   The sufficient and
necessary conditions for  SPT, PT, UWT, WT
  under NOR or ABS,  and for QPT under NOR were given in
  \cite{LX}, and for $(s,t)$-WT with $s>0$ and $t\ge 1$
 under ABS or NOR  in
 \cite{X2}. However, the authors did not find out the
matching necessary and sufficient conditions on $(s,t)$-WT with
$s>0$ and $t\in(0,1)$ under ABS or NOR.
 Combining with our results, we have the
 following  results about the tractability of the above problem $S$
  using  ABS and NOR.

\vskip 2mm

$\bullet$  For NOR or ABS, SPT holds iff PT holds iff
\begin{equation*}\underset{j\to\infty}{\underline{\lim}}\frac{a_j}{\ln j}> \frac{1}{\ln \oz^{-1}}.\end{equation*}

\vskip 1mm

$\bullet$  For NOR, QPT holds iff $$\sup_{d\in \Bbb N}\frac{1}{\ln
^+d}\sum_{k=1}^da_k\oz ^{a_k} <\infty.$$

\vskip 1mm

$\bullet$ For ABS or NOR,  UWT holds  iff
\begin{equation}\label{4.26}\underset{j\to\infty}{\underline{\lim}}\frac{a_j}{\ln j}\ge \frac{1}{\ln \oz^{-1}}.\end{equation}

\vskip 1mm

$\bullet$ For ABS or NOR, $(s,t)$-WT with $s>0$ and $t>1$ always
holds.

\vskip 3mm

$\bullet$  For ABS or NOR, $(s,1)$-WT with $s>0$ holds iff WT
holds iff $\lim\limits_{j\to \infty}a_j=\infty$.

\vskip 3mm

$\bullet$  For ABS or NOR, $(s,t)$-WT with $s>0$ and $t\in(0,1)$
holds iff $$ \lim\limits_{k\to\infty}k^{1-t}a_k\oz^{a_k}=0.$$
\vskip 1mm

\begin{rem} In
 \cite{X2}, Xu obtained that the sufficient and
necessary condition  for UWT under ABS or NOR  is
$\lim\limits_{j\to\infty}\oz^{a_j}j^p=0$ for all $p\in(0,1)$. This
condition is equivalent to \eqref{4.26}.
\end{rem}

\end{document}